\theoremstyle{plain}
\newtheorem{theorem}{Theorem}
\newtheorem{lemma}[theorem]{Lemma}
\newtheorem{corollary}[theorem]{Corollary}
\theoremstyle{definition}
\newtheorem{definition}[theorem]{Definition}
\begin{document}

\title{Discounted Gottschalk-Hedlund theorem}

\author{Xifeng Su \footnote{School of Mathematical Sciences,
Beijing Normal University,
No. 19, XinJieKouWai St., HaiDian District,
 Beijing 100875, P. R. China,
\texttt{xfsu@bnu.edu.cn}}
\and
Philippe Thieullen \footnote{Institut de Math\'ematiques de Bordeaux,
Universit\'e de Bordeaux,
351, cours de la Lib\'eration - F 33405 Talence, France,
\texttt{philippe.thieullen@math.u-bordeaux1.fr}
}
} 

\date{\today}
\maketitle

\begin{abstract}
Ergodic optimization and discrete weak KAM theory are two parallel theories with several results in common. For instance, the Mather set is the locus of orbits which minimize the ergodic averages of a given observable. In the favorable cases, the observable is cohomologous to its ergodic minimizing value on the Mather set, and the discrete weak KAM solution plays the role of the transfer function. One possibility of construction of such a coboundary is by using the non linear Lax-Oleinik operator. The other possibility is by using a discounted cohomological equation. It is known that the discounted discrete weak KAM solution converges to some selected  weak KAM solution. We show that, in the ergodic optimization case for a coboundary observable over a minimal system, the discounted transfer function converges if and only if the observable is balanced.
\end{abstract}

\section{Notations and main statements}

We consider a {\it topological dynamical system} $(\Omega,\sigma)$ where $\Omega$ is a compact metric space and $\sigma:\Omega \to \Omega$ is a continuous map. We denote by $\mathscr{P}(\Omega,\sigma)$ the set of probability $\sigma$-invariant measures, and for every continuous function $f\in C^0(\Omega)$, by $\bar f$, the {\it ergodic minimizing value} of $f$
\begin{gather} \label{equation:ErgodicMinimizingValue}
\bar f := \min_{\mu \in \mathscr{P}(\Omega,\sigma)} \int\! f \, d\mu.
\end{gather}
A {\it minimizing measure} is a probability invariant measure realizing the minimum in \eqref{equation:ErgodicMinimizingValue}. We denote by $\mathscr{P}_{min}(\Omega,\sigma,f)$ the set of minimizing measures.

Given a continuous function $f:X \to \mathbb{R}$, we want to solve the following {\it cohomological equation} where $(M,u)$ are the two unknowns,
\begin{gather} \label{equation:CohomologicalEquation}
\begin{cases}
M \ \text{is a Borel invariant set,  and $\mu(M)=1$ for some $\mu \in \mathscr{P}(\Omega,\sigma)$}, \\
u:\Omega \to \mathbb{R} \ \text{is a non-negative Borel function}, \\
\forall \omega\in \Omega, \ f(\omega) - \bar f \geq u\circ(\omega) - u(\omega), \\
\forall \omega \in M, \ f(\omega)-\bar f = u \circ \sigma(\omega) - u(\omega).
\end{cases} \tag{CE}
\end{gather}
A function of the form $u \circ\sigma - u$ is called a {\it coboundary}, and $u$ is usually called a {\it transfer function}. 

Notice that such an invariant measure $\mu$ giving a unit mass to $M$  is necessarily a minimizing measure and satisfies $\text{\rm supp}(\mu) \subseteq \bar M$. As we are interested in the ``largest'' set $M$ for which such a transfer function $u$ exists, it is hence natural to consider the following set, called {\it Mather set} and  defined by
\begin{gather} \label{equation:MatherSet}
\mathcal{M}(f) := \bigcup \Big\{ \text{supp}(\mu) : \mu \in \mathscr{P}_{min}(\Omega,\sigma,f) \Big\}.
\end{gather}
It is easy to see that the Mather set is closed, invariant, and is equal to the support of some minimizing measure. The terminology ``Mather set'',  following Mather \cite{Mather1991} (where it is denoted $\text{\rm supp}\mathcal{M}_c$ before proposition 3), comes from the weak KAM theory initiated by Ma\~n\'e \cite{Mane1996} (Theorem B, cohomological equation on each $\text{\rm supp}(\mu)$), then extended  by Fathi \cite{Fathi1997} (theorem 1, sub-cohomological equation on the whole set $\Omega$) and later thoroughly studied by Fathi in \cite{Fathi2008} (the final terminology in section 4.12). 

For strongly regular systems, if the dynamical system $(\Omega,\sigma)$ is a Smale space \cite{Putnam2014} (for example a sub-shift of finite type)  and the function $f$ is Walters  \cite{Walters1978} (for example H\"older), then the cohomological equation (CE) admits a solution $(M,u)$ where $M=\mathcal{M}(f)$ and $u$ is Walters, see Bousch \cite{Bousch2001}. In an opposite direction, if $(\Omega,\sigma)$ is a topological dynamical system admitting invariant measures with different supports, for $C^0$ generic function $f$, every minimizing measure $\mu$ has full support, $\text{\rm supp}(\mu)=\Omega$, and there is no solution $(M,u)$ of (CE) with a continuous $u$, see Bousch \cite{Bousch2001}. There also exists  $C^{\infty}$ lacunary functions on the torus $f : \mathbb{T} \to \mathbb{R}$ and Liouville numbers $\alpha$ such that on the minimal and uniquely ergodic dynamical system $(\mathbb{T},R_\alpha)$, ($R_\alpha$ denotes the rotation by $\alpha$), there is no solution $(M,u)$ of (CE) with a Borel $u$, see Katok-Robinson \cite{KatokRobinson2001} (remarks 1 after theorem 3.5) and Herman \cite{Herman2004}.

\medskip
Our first result is the following.

\begin{theorem} \label{theorem:GottschalkHedlundMather}
Let $(\Omega,\sigma)$ be a topological dynamical system and $f : \Omega \to \mathbb{R}$ be a continuous function. Assume
\[
\forall \omega \in \Omega, \quad u(\omega) := - \inf_{n\geq1} \sum_{k=0}^{n-1} \big( f-\bar f \big)\circ \sigma^k(\omega) < +\infty.
\]
Let $u^+ := \max(u,0)$. Define a Borel set
\[
M := \Big\{ \omega \in \mathcal{M}(f) : \forall k\geq0, \ 
\big( f-\bar f - u^+\circ \sigma + u^+ \big) \circ\sigma^k(\omega) = 0
\Big\}.
\]
Then $(M,u^+)$ is a solution of the cohomological equation \eqref{equation:CohomologicalEquation}: 
\begin{enumerate}
\item \label{item:GottschalkHedlundMather-1} $u^+$ is lower semi-continuous,
\item \label{item:GottschalkHedlundMather-11} $\forall \omega\in \Omega, f(\omega)-\bar f \geq  u^+\circ \sigma(\omega) -  u^+ (\omega) $,
\item \label{item:GottschalkHedlundMather-2} $\forall \mu \in \mathscr{P}_{min}(\Omega,\sigma,f), \ \mu(M)=1$,
\item \label{item:GottschalkHedlundMather-3} $M$ is residual in $\mathcal{M}(f)$.
\end{enumerate}
\end{theorem}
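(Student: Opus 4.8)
The plan is to reformulate $u^+$ as a single supremum and to extract from it a Bellman-type fixed-point equation, from which all four assertions follow. Writing $g := f-\bar f$ and $S_n g := \sum_{k=0}^{n-1} g\circ\sigma^k$ (with the convention $S_0 g = 0$), the hypothesis reads $u = \sup_{n\geq1}(-S_n g) < +\infty$, so that $u^+ = \max(u,0) = \sup_{n\geq0}(-S_n g)$. Being a supremum of the continuous functions $-S_n g$, the function $u^+$ is lower semi-continuous, which is item~\eqref{item:GottschalkHedlundMather-1}. Splitting off the term $n=0$ and using the cocycle relation $S_n g = g + (S_{n-1}g)\circ\sigma$, I would then check the identity $u^+ = \max\!\big(0,\, u^+\circ\sigma - g\big)$. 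This immediately yields $u^+\circ\sigma - u^+ \leq g$, i.e.\ item~\eqref{item:GottschalkHedlundMather-11}, and shows that the defect $h := g - u^+\circ\sigma + u^+ \geq 0$ vanishes at $\omega$ exactly when $u^+\circ\sigma(\omega) \geq g(\omega)$.

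For item~\eqref{item:GottschalkHedlundMather-2} it suffices, by the ergodic decomposition and the fact that almost every ergodic component $\mu_\xi$ of a minimizing measure is again minimizing (each satisfies $\int g\,d\mu_\xi \geq \bar g = 0$ while the components integrate to the total $\int g\,d\mu = 0$), to treat an ergodic minimizing measure $\mu$. Telescoping the coboundary part gives $\sum_{k=0}^{n-1} h\circ\sigma^k = S_n g + u^+ - u^+\circ\sigma^n \leq S_n g + u^+$, where the last inequality only discards the non-negative term $u^+\circ\sigma^n$. Dividing by $n$ and letting $n\to\infty$, the left-hand side converges $\mu$-a.e.\ to $\int h\,d\mu \in [0,+\infty]$ by the ergodic theorem for non-negative functions, while $\tfrac1n S_n g \to \int g\,d\mu = 0$ and $\tfrac1n u^+ \to 0$ pointwise. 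Hence $\int h\,d\mu \leq 0$, and since $h\geq0$ we get $h = 0$ $\mu$-a.e. As each $\sigma^{-k}\{h=0\}$ then has full measure and $\mu(\mathcal{M}(f)) = 1$, the set $M = \mathcal{M}(f)\cap\bigcap_{k\geq0}\sigma^{-k}\{h=0\}$ satisfies $\mu(M)=1$.

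For item~\eqref{item:GottschalkHedlundMather-3}, the observation $\{h=0\} = \{u^+\circ\sigma - g \geq 0\}$ is decisive: since $u^+\circ\sigma - g$ is lower semi-continuous, this set equals $\bigcap_{m\geq1}\{u^+\circ\sigma - g > -1/m\}$, a $G_\delta$; taking continuous preimages under $\sigma^k$ and intersecting countably, $M$ is a $G_\delta$ subset of $\mathcal{M}(f)$. Density then follows from item~\eqref{item:GottschalkHedlundMather-2}: choosing a minimizing measure $\mu$ with $\operatorname{supp}\mu = \mathcal{M}(f)$ (such a measure exists, the Mather set being the support of some minimizing measure), the full-measure set $M$ meets every non-empty relatively open subset of $\mathcal{M}(f)$, so $M$ is dense. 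A dense $G_\delta$ is residual, which gives the claim.

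The delicate point is item~\eqref{item:GottschalkHedlundMather-2}: a priori $u^+$ need not be $\mu$-integrable, so one cannot simply integrate the coboundary identity and invoke invariance to conclude. The device above circumvents this by retaining only the favorable sign of $u^+\circ\sigma^n$ and appealing to the ergodic theorem for non-negative functions, which holds regardless of integrability and retroactively forces $\int h\,d\mu$ to be finite (indeed zero). I expect the residual care to lie in justifying the Bellman identity and the ergodic-decomposition reduction cleanly, rather than in any hard estimate.
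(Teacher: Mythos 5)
Your proof is correct, and it takes a genuinely different route from the paper in the two substantive items, \eqref{item:GottschalkHedlundMather-2} and \eqref{item:GottschalkHedlundMather-3}. For items \eqref{item:GottschalkHedlundMather-1}--\eqref{item:GottschalkHedlundMather-11} the content is the same: your Bellman identity $u^+=\max\big(0,\,u^+\circ\sigma-g\big)$ is equivalent to the paper's key identity $f-\bar f=u^+\circ\sigma-u$, which the paper obtains by a case analysis on whether the infimum defining $u$ is attained at $n=1$ or at some $n\geq2$; your sup-splitting derivation is a cleaner packaging of the same fact. For item \eqref{item:GottschalkHedlundMather-2}, the paper writes $f-\bar f=(u^+\circ\sigma-u^+)+u^-$, integrates against a minimizing measure, and invokes $u^+\circ\sigma-u^+\in L^1(\mu)$ together with $\int(u^+\circ\sigma-u^+)\,d\mu=0$ --- a standard but nontrivial lemma on coboundaries of possibly non-integrable measurable transfer functions, which the paper leaves implicit --- to conclude $\int u^-\,d\mu=0$, i.e.\ $u\geq0$ $\mu$-a.e. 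Your route (ergodic decomposition, telescoping the non-negative defect $h$, discarding the favorably signed term $u^+\circ\sigma^n$, and the one-sided Birkhoff theorem for non-negative functions) proves exactly what is needed without appealing to that lemma, so it is more self-contained. For item \eqref{item:GottschalkHedlundMather-3}, the paper takes the set $R$ of continuity points of $u$ in the Mather set (residual by lower semi-continuity and Baire), proves $u\geq0$ on $R$ by a support argument, and then passes to $\bigcap_{k\geq0}\sigma^{-k}(R)$; this last step is delicate, since preimages of residual sets under a continuous map are not automatically residual. You sidestep this entirely: the Bellman identity lets you write $\{h=0\}=\{u^+\circ\sigma-g\geq0\}$, a $G_\delta$ because $u^+\circ\sigma-g$ is lower semi-continuous, so $M$ is a $G_\delta$ in $\mathcal{M}(f)$ outright, and density follows from item \eqref{item:GottschalkHedlundMather-2} applied to a minimizing measure whose support is all of $\mathcal{M}(f)$ (a fact the paper states in Section~1, so you may use it). In sum, the paper's argument yields the extra information that $u=u^+$ holds $\mu$-a.e.\ and at the continuity points of $u$ in the Mather set, whereas yours is more economical and, for the residuality claim, arguably more airtight.
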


The following corollary is an extension of Gottschalk-Hedlund theorem \cite{GottschalkHedlund1955} for every minimal subsets of the Mather set.

\begin{corollary} \label{corollary:GottschalkHedlundMather}
Let $(\Omega,\sigma)$ be a topological dynamical system, and $f : \Omega \to \mathbb{R}$ be a continuous function. Assume
\[
\exists C\geq0, \ \forall \omega \in\Omega, \ \forall n\geq1, \quad \sum_{k=0}^{n-1} ( f-\bar f )\circ \sigma^k(\omega) \geq -C.
\]
Then
\begin{enumerate}
\item \label{item:GottschalkHedlundMather-21} $\forall \omega \in \mathcal{M}(f), \ \forall n\geq1, \ \sum_{k=0}^{n-1} (f-\bar f) \circ \sigma^k(\omega) \leq C$,
\item \label{item:GottschalkHedlundMather-22} if $\mu$ is invariant and $\text{\rm supp}(\mu) \subset \mathcal{M}(f)$ then $\mu$ is minimizing,
\item \label{item:GottschalkHedlundMather-23}  there exists a lower semi-continuous function  $u:\Omega \to \mathbb{R}$ such that
\begin{enumerate}
\item $0\leq u \leq C$,
\item $\forall \omega \in \Omega,\ f(\omega) -\bar f \geq u\circ\sigma(\omega) -u(\omega)$,
\item for every minimal subset $X \subseteq \mathcal{M}(f)$, $u$ is continuous on $X$ and
\[
\forall \omega \in X, \quad  f(\omega) -\bar f = u\circ\sigma(\omega) -u(\omega).
\]
\end{enumerate}
\end{enumerate}
\end{corollary}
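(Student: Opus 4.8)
The plan is to deduce everything from Theorem~\ref{theorem:GottschalkHedlundMather}. The hypothesis here — that the Birkhoff sums of $f-\bar f$ are uniformly bounded below by $-C$ — is precisely what forces the quantity
\[
u(\omega) = -\inf_{n\geq1}\sum_{k=0}^{n-1}(f-\bar f)\circ\sigma^k(\omega)
\]
to be finite (indeed $0\le u\le C$, taking $n=1$ for the lower bound on $u$ and the hypothesis for the upper bound), so the theorem applies and $u=u^+$ already satisfies $0\le u\le C$, giving (iii)(a). Item (i) of the theorem gives lower semi-continuity, and item (ii) of the theorem gives the subcohomology inequality $f-\bar f\ge u\circ\sigma-u$ everywhere, which is exactly (iii)(b). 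So the substance to be added is (i), (ii), and the ``continuous and exact on minimal subsets'' half of (iii)(c).

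\smallskip
\textbf{Proof of (i).} First I would prove the \emph{two-sided} bound on the Mather set. On $\mathcal{M}(f)$ one can reverse the inequality: I would take $\omega\in\mathcal{M}(f)$, observe that $\omega$ lies in the support of some minimizing measure $\mu$, and use Poincar\'e recurrence to find, for each fixed $n$, a point $\omega'$ near $\omega$ and a large return time $N$ with $\sigma^N(\omega')$ also near $\omega$. Writing the Birkhoff sum over the long block $[0,N+n)$ as the sum over $[0,N)$ plus the sum over $[N,N+n)$, the full block is bounded below by $-C$ while the tail block $[N,N+n)$ is, by continuity and the closing-up, approximately $\sum_{k=0}^{n-1}(f-\bar f)\circ\sigma^k(\omega)$; since the head block $[0,N)$ has Birkhoff average tending to $\int(f-\bar f)\,d\mu=0$ along a suitable subsequence, one extracts $\sum_{k=0}^{n-1}(f-\bar f)\circ\sigma^k(\omega)\le C$ in the limit. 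This is item (i). (The main technical obstacle is controlling this closing-up argument uniformly in $n$; alternatively one invokes that $u$ restricted to $\mathcal{M}(f)$ is also bounded by the symmetric construction, i.e.\ $\sup_n\sum_{k=0}^{n-1}(f-\bar f)\circ\sigma^k \le C$ on $\mathcal{M}(f)$.)

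\smallskip
\textbf{Proof of (ii).} If $\mu$ is invariant with $\text{\rm supp}(\mu)\subset\mathcal{M}(f)$, then integrating the bound from (i): for every $n$, $\int\frac1n\sum_{k=0}^{n-1}(f-\bar f)\circ\sigma^k\,d\mu=\int(f-\bar f)\,d\mu$ by invariance, and since the integrand $\frac1n\sum_{k=0}^{n-1}(f-\bar f)\circ\sigma^k$ is $\le C/n$ on $\mathcal{M}(f)\supseteq\text{\rm supp}(\mu)$, letting $n\to\infty$ forces $\int(f-\bar f)\,d\mu\le0$, i.e.\ $\int f\,d\mu\le\bar f$. Combined with the definition of $\bar f$ as the minimum, this gives $\int f\,d\mu=\bar f$, so $\mu$ is minimizing.

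\smallskip
\textbf{Proof of (iii)(c).} Let $X\subseteq\mathcal{M}(f)$ be minimal. By unique minimality every invariant measure supported in $X$ has full support $X$; by (ii) it is minimizing, so $X\subseteq\mathcal{M}(f)$ is consistent and, more importantly, Theorem~\ref{theorem:GottschalkHedlundMather}(\ref{item:GottschalkHedlundMather-2}) gives $\mu(M)=1$, whence $M\cap X$ is dense in $X$ (a full-measure set in the support of a fully-supported measure is dense). On the invariant, dense, exact-equality set $M$ we have $f-\bar f=u\circ\sigma-u$; I would then upgrade this from ``$M\cap X$ dense'' to ``all of $X$'' by using lower semi-continuity of $u$ together with the reverse inequality obtained from the two-sided bound in (i). Concretely, on $X$ one has both $u\circ\sigma-u\le f-\bar f$ (from (iii)(b)) and, from the boundedness of Birkhoff sums in both directions on $\mathcal M(f)$, the telescoping identity forces equality and continuity of $u$ along orbits in $X$; minimality then propagates continuity to all of $X$. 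The expected main obstacle is precisely this last upgrade: passing from equality on the dense residual set $M\cap X$ to equality and genuine continuity everywhere on $X$, which relies on the two-sided Birkhoff bound peculiar to minimal subsets rather than on the merely lower semi-continuous regularity of $u$ available on all of $\Omega$.
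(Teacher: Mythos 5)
Your overall framing (deduce everything from Theorem~\ref{theorem:GottschalkHedlundMather}) and your item (ii) are fine, but your proof of item (i) has a genuine gap. Write $S_n(\omega):=\sum_{k=0}^{n-1}(f-\bar f)\circ\sigma^k(\omega)$. In your closing-up argument the chain of inequalities is $S_n(\omega)\approx S_n(\sigma^N\omega')=S_{N+n}(\omega')-S_N(\omega')\le S_{N+n}(\omega')+C$, so what you need is an \emph{upper bound by a constant} (or by $\epsilon$) on $S_{N+n}(\omega')$ along a subsequence of $N$'s. But the only control you invoke is that the Birkhoff \emph{average} of the head block tends to $0$, which gives $S_{N}(\omega')=o(N)$ --- useless here, since the sums may still drift to $+\infty$ (e.g.\ like $\sqrt N$). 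Upgrading $o(N)$ to ``returns below a constant infinitely often'' is essentially Atkinson's theorem, which you neither state nor prove; and your fallback (``alternatively one invokes that $\sup_n S_n\le C$ on $\mathcal{M}(f)$'') is circular, because that statement \emph{is} item (i). The paper's fix is short and uses only what Theorem~\ref{theorem:GottschalkHedlundMather} already provides: on the residual set $M\subseteq\mathcal{M}(f)$ the cohomological equation holds exactly, so the sums telescope, $S_n(\omega)=u^+\circ\sigma^n(\omega)-u^+(\omega)\le C$ for $\omega\in M$; since each $S_n$ is continuous and $M$ is dense in $\mathcal{M}(f)$, the bound extends to all of $\mathcal{M}(f)$. (A minor separate slip: $u$ from the theorem need not equal $u^+$, since $u$ can be negative where all Birkhoff sums are positive; one must work with $u^+$, which does satisfy $0\le u^+\le C$.)

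The second gap is (iii)(c), and you flagged it yourself without closing it: you give no mechanism for upgrading equality on the dense set $M\cap X$ to equality \emph{and continuity} on all of $X$. Phrases like ``the telescoping identity forces equality and continuity of $u$ along orbits'' and ``minimality then propagates continuity'' are assertions, not arguments; lower semi-continuity of $u$ plus density of the equality set does not yield either conclusion. The missing idea in the paper is to apply Theorem~\ref{theorem:GottschalkHedlundMather} a second time, to $-f$ on the minimal subsystem $(X,\sigma)$: by items (i) and (ii), every invariant measure on $X$ is minimizing and $\sup_{n\ge1}S_n\le C$ on $X$, so $-f$ satisfies the hypothesis of the theorem on $X$, producing a non-positive upper semi-continuous $v:X\to\mathbb{R}$ with $f-\bar f\le v\circ\sigma-v$ on $X$. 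Then $u\circ\sigma-u\le f-\bar f\le v\circ\sigma-v$ on $X$, so $w:=u-v$ is lower semi-continuous and satisfies $w\circ\sigma\le w$; it attains its minimum $D$ on the compact set $X$, and the sublevel set $\{\omega\in X: w(\omega)\le D\}$ is nonempty, compact and $\sigma$-invariant, hence equals $X$ by minimality. Thus $u-v\equiv D$ on $X$, so $u=v+D$ is simultaneously lower and upper semi-continuous, i.e.\ continuous on $X$, and the sandwiched inequalities collapse to the equality $f-\bar f=u\circ\sigma-u$ on $X$. Without this (or an equivalent) argument, (iii)(c) remains unproven.
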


If $(\Omega,\sigma)$ is minimal, the Mather set must be equal to $\Omega$ and we recover the classical Gottshalk-Hedlund theorem.  The following statement is a slightly improved extension.

\begin{theorem}[Gottschalk-Hedlund \cite{GottschalkHedlund1955}] \label{theorem:GottschalkHedlund}
If $(\Omega,\sigma)$ is minimal, $f\in C^0(\Omega)$, and 
\[
\forall \omega \in\Omega, \quad \inf_{n\geq1} \sum_{k=0}^{n-1} ( f-\bar f )\circ \sigma^k(\omega) > -\infty,
\]
then $\exists u \in C^0(\Omega), \ \forall \omega \in \Omega, \ f(\omega) - \bar f =  u\circ \sigma(\omega) - u(\omega)$.
\end{theorem}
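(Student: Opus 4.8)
\emph{Proof plan.} The strategy is to deduce the statement from Corollary~\ref{corollary:GottschalkHedlundMather} rather than to argue from scratch. Since $(\Omega,\sigma)$ is minimal, every nonempty closed invariant set equals $\Omega$; in particular the support of any invariant, hence of any minimizing, measure is all of $\Omega$, so $\mathcal{M}(f)=\Omega$ and $\Omega$ is itself a minimal subset of $\mathcal{M}(f)$. Consequently, if I can verify the uniform hypothesis of Corollary~\ref{corollary:GottschalkHedlundMather}, namely that there is $C\ge 0$ with $\sum_{k=0}^{n-1}(f-\bar f)\circ\sigma^k(\omega)\ge -C$ for all $\omega$ and all $n\ge 1$, then conclusion \ref{item:GottschalkHedlundMather-23} applied to $X=\Omega$ yields a function $u$ continuous on $\Omega$ with $f-\bar f=u\circ\sigma-u$ everywhere, which is exactly the assertion. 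Thus the whole theorem reduces to upgrading the \emph{pointwise} lower bound on the Birkhoff sums (the hypothesis $\inf_{n\ge1}\sum_{k=0}^{n-1}(f-\bar f)\circ\sigma^k(\omega)>-\infty$) to a \emph{uniform} one.

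This upgrade is the heart of the argument and the step I expect to be the main obstacle. I would work with $u^+(\omega):=\sup_{n\ge 0}\big(-\sum_{k=0}^{n-1}(f-\bar f)\circ\sigma^k(\omega)\big)$, which coincides with the $u^+$ of Theorem~\ref{theorem:GottschalkHedlundMather}: being a supremum of continuous functions it is lower semi-continuous, and the hypothesis says exactly that it is finite at every point. Splitting off the first term of the Birkhoff sum gives the one-step relation $u^+(\omega)=\max\!\big(0,\,u^+\!\circ\sigma(\omega)-(f-\bar f)(\omega)\big)$, whence $u^+(\omega)\le u^+\!\circ\sigma(\omega)+\|f-\bar f\|_\infty$, which iterates to $u^+(\omega)\le u^+\!\circ\sigma^n(\omega)+n\,\|f-\bar f\|_\infty$ for every $n\ge 0$.

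To turn pointwise finiteness into a uniform bound I would invoke a Baire category argument together with minimality and compactness. The closed sublevel sets $F_k:=\{\,u^+\le k\,\}$ cover $\Omega$, so by the Baire property of the compact metric space $\Omega$ some $F_k$ has nonempty interior $U$, on which $u^+\le k$. By minimality the forward orbit of every point is dense and therefore meets the open set $U$, so $\Omega=\bigcup_{n\ge0}\sigma^{-n}(U)$; compactness then yields a finite subcover $\Omega=\bigcup_{n=0}^{N_0}\sigma^{-n}(U)$. Hence for each $\omega$ there is some $0\le n\le N_0$ with $u^+\!\circ\sigma^n(\omega)\le k$, and the iterated inequality gives $u^+(\omega)\le k+N_0\,\|f-\bar f\|_\infty=:C$. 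Since $\inf_{n\ge1}\sum_{k=0}^{n-1}(f-\bar f)\circ\sigma^k\ge -u^+\ge -C$, the uniform hypothesis of Corollary~\ref{corollary:GottschalkHedlundMather} holds, and the first paragraph completes the proof. The only delicate point is this uniformization of the lower bound; everything else is a direct reading of the already established results.
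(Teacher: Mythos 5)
Your proof is correct, and it reaches the conclusion by a genuinely different route than the paper at the one step that matters. Both arguments reduce the theorem to item \ref{item:GottschalkHedlundMather-23} of Corollary~\ref{corollary:GottschalkHedlundMather} applied to $X=\Omega=\mathcal{M}(f)$, so everything hinges on upgrading the pointwise lower bound on the Birkhoff sums to a uniform one, exactly as you say. You do this with the classical Gottschalk--Hedlund uniform-boundedness argument: lower semi-continuity and pointwise finiteness of $u^+$ make the closed sublevel sets $\{u^+\le k\}$ cover $\Omega$, Baire category produces one with nonempty interior $U$, and minimality plus compactness give a uniform time $N_0$ by which every orbit enters $U$, so the one-step inequality $u^+\le u^+\circ\sigma+\|f-\bar f\|_\infty$ yields $u^+\le k+N_0\|f-\bar f\|_\infty$ everywhere (your one-step relation $u^+=\max\bigl(0,\,u^+\circ\sigma-(f-\bar f)\bigr)$ is correct). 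The paper instead invokes Lemma~\ref{lemma:Morris} (Morris's proposition A.7) to produce a single point $\omega_*$ with $\sum_{k=0}^{n-1}(f-\bar f)\circ\sigma^k(\omega_*)\le 0$ for all $n$; the hypothesis at that one point then gives $-C\le S_n(\omega_*)\le 0$, hence every tail sum $S_{m+n}(\omega_*)-S_m(\omega_*)\ge -C$, i.e.\ the bound $-C$ holds along the dense forward orbit of $\omega_*$, and continuity of each $S_n$ propagates it to all of $\Omega$. The trade-off: your argument is self-contained modulo standard topology (no need for Morris's lemma, which is itself a nontrivial result), but it genuinely uses the finiteness hypothesis at every point, since the sublevel sets must cover $\Omega$; the paper's argument outsources the hard work to Lemma~\ref{lemma:Morris} and in exchange exploits the hypothesis only at the single point $\omega_*$, so it in fact establishes the conclusion under the formally weaker assumption that the infimum is finite at some point furnished by that lemma.
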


Notice that if $(\Omega,\sigma)$ is uniquely ergodic, $\mathcal{M}(f)=  \text{\rm supp}(\mu)$  and $\bar f = \int\! f\,d\mu$  for a unique ergodic measure $\mu$.

\medskip
We now consider a weaker form of the cohomological equation that we call {\it discounted cohomological equation}:
\begin{gather} \label{equation:DiscountedCohomologicalEquation}
\begin{cases}
\forall \epsilon>0, \ u_\epsilon : \Omega \to \mathbb{R} \ \text{is a $C^0$ function}, \\
\forall \epsilon>0, \ \forall \omega \in \Omega, \ f(\omega) = (1-\epsilon)u_\epsilon \circ \sigma (\omega) - u_\epsilon(\omega).
\end{cases} \tag{DCE}
\end{gather}
Notice that \eqref{equation:DiscountedCohomologicalEquation} has a unique solution, called {\it discounted transfer function}, and given by
\begin{gather} \label{equation:DiscountedSolution}
U_\epsilon[f](\omega) := -\sum_{k\geq0} (1-\epsilon)^k f\circ \sigma^k(\omega).
\end{gather}
We question whether the discounted solution $U_\epsilon[f]$ converges to some solution of (CE) as $\epsilon\to0$. We give a complete answer when $f$ is a  coboundary over a minimal system.

\begin{definition}
Let $(\Omega,\sigma)$ be a topological dynamical system, and $f:\Omega \to \mathbb{R}$ be a continuous function.
\begin{enumerate}
\item We say that $f$ is a {\it regular coboundary} if there exists a continuous function $u:\Omega \to \mathbb{R}$ such that $f = u\circ\sigma-u$.
\item We say that $f$ is a {\it balanced coboundary} if there exists a continuous function $u:\Omega \to \mathbb{R}$ such that $f = u\circ\sigma-u$ and $\int\! u \, d\mu$ is independent of $\mu \in \mathscr{P}(\Omega,\sigma)$.
\end{enumerate}
\end{definition}

Our second result is the following.

\begin{theorem} \label{theorem:DiscountedGottschalkHedlund}
Let $(\Omega,\sigma)$ be a topological dynamical system, and $f:\Omega \to \mathbb{R}$ be a regular coboundary.
\begin{enumerate}
\item \label{item:DiscountedGottschalkHedlund-1} If $f$ is balanced, then there exists a unique  $u\in C^0(\Omega)$ such that $f= u\circ\sigma-u$ and $\int\! u \,d\mu = 0, \ \forall \mu \in \mathscr{P}(\Omega,\sigma)$, and  $U_\epsilon[f] \to u$ uniformly in $\Omega$.
\item \label{item:DiscountedGottschalkHedlund-2} If  $(\Omega,\sigma)$ is minimal and $f$ is not balanced, then there exist $u\in C^0(\Omega)$ satisfying $f = u \circ \sigma - u$,  two ergodic invariant measures $\mu_0,\mu_1$  satisfying $\int\! u\, d\mu_0 \not= \int\! u\,d\mu_1$, and a residual set $M\subseteq \Omega$ such that, for every $\omega \in M$, there exists a decreasing sequence $(\epsilon_n)_{n\geq0}$ converging to 0  such that
\[
U_{\epsilon_{2p}}[f](\omega) \to u - \int\! u\,d\mu_0, \ U_{\epsilon_{2p+1}}[f](\omega) \to u - \int\! u\,d\mu_1.
\]
\end{enumerate}
\end{theorem}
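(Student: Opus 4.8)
The plan is to reduce everything to the asymptotics of a single quantity, the discounted average of the transfer function. Writing $\lambda = 1-\epsilon$ and summing by parts in \eqref{equation:DiscountedSolution} after substituting $f = u\circ\sigma - u$, I would first establish the identity
\[
U_\epsilon[f](\omega) = \frac{u(\omega) - S_\epsilon(\omega)}{1-\epsilon}, \qquad S_\epsilon(\omega) := \epsilon\sum_{k\geq0}(1-\epsilon)^k\, u\circ\sigma^k(\omega).
\]
Here $S_\epsilon(\omega) = \int u\,d\nu_\epsilon^\omega$ is the integral of $u$ against the discounting probability $\nu_\epsilon^\omega := \epsilon\sum_{k\geq0}(1-\epsilon)^k\delta_{\sigma^k\omega}$, and a telescoping estimate shows $\int g\circ\sigma\,d\nu_\epsilon^\omega - \int g\,d\nu_\epsilon^\omega \to 0$ uniformly in $\omega$ for every $g\in C^0(\Omega)$; hence every weak-$*$ limit of $\nu_\epsilon^\omega$ as $\epsilon\to0$ is $\sigma$-invariant. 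Two continuous solutions of $f = u\circ\sigma - u$ differ by a $\sigma$-invariant continuous function, which the averaging defining $S_\epsilon$ reproduces unchanged, so the identity is consistent for any choice of representative $u$.

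For item \eqref{item:DiscountedGottschalkHedlund-1} I would first normalize. The balanced hypothesis makes $c := \int u\,d\mu$ independent of $\mu$, so $u - c$ is a transfer function with zero integral against every invariant measure; uniqueness follows because a continuous $\sigma$-invariant $v$ with $\int v\,d\mu = 0$ for all $\mu$ vanishes (on the $\omega$-limit set of any point $v$ is constant equal to $v(\omega)$, and that set carries an invariant measure, forcing $v(\omega) = 0$). It then suffices to prove $S_\epsilon \to 0$ uniformly for this normalized $u$, whence $\|U_\epsilon[f] - u\| = \|(\epsilon u - S_\epsilon)/(1-\epsilon)\| \to 0$. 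Uniform convergence of $S_\epsilon$ I would get by contradiction: a sequence $\epsilon_k\to0$ and points $\omega_k$ with $|S_{\epsilon_k}(\omega_k)|\geq\delta$ would, after extracting a weak-$*$ convergent subsequence of $\nu_{\epsilon_k}^{\omega_k}$, produce an invariant $\nu$ with $|\int u\,d\nu|\geq\delta$, contradicting the normalization.

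For item \eqref{item:DiscountedGottschalkHedlund-2}, minimality forces every invariant measure to have full support and makes $u$ unique up to an additive constant. Since $f$ is not balanced, the affine continuous functional $\mu\mapsto\int u\,d\mu$ is non-constant on $\mathscr{P}(\Omega,\sigma)$; I set $\alpha := \min_\mu\int u\,d\mu < \beta := \max_\mu\int u\,d\mu$ and pick ergodic $\mu_0,\mu_1$ attaining these extreme values. The invariance of weak-$*$ limit points of $\nu_\epsilon^\omega$ gives $\alpha \leq \liminf_{\epsilon\to0}S_\epsilon(\omega) \leq \limsup_{\epsilon\to0}S_\epsilon(\omega) \leq \beta$ for every $\omega$, so it remains to produce a residual set on which both bounds are attained. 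For $\eta>0$ and $N\geq1$ I would consider
\[
A_{\eta,N} := \bigcup_{0<\epsilon<1/N}\{\omega : S_\epsilon(\omega) < \alpha + \eta\}, \qquad B_{\eta,N} := \bigcup_{0<\epsilon<1/N}\{\omega : S_\epsilon(\omega) > \beta - \eta\}.
\]
Each is open since $\omega\mapsto S_\epsilon(\omega)$ is continuous for fixed $\epsilon$. Their density is the heart of the argument and the step I expect to be the main obstacle: given a Birkhoff-generic point $x_0$ for $\mu_0$, its discounted averages satisfy $S_\epsilon(x_0)\to\alpha$ (Abel summation of a Cesàro-convergent sequence), and the same holds along any tail of its orbit; minimality lets me move $x_0$ into any prescribed open set by a power of $\sigma$ without disturbing this asymptotic average, giving density of $A_{\eta,N}$, and symmetrically of $B_{\eta,N}$ via $\mu_1$. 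By Baire, $M := \bigcap_{m,N}(A_{1/m,N}\cap B_{1/m,N})$ is residual, and every $\omega\in M$ satisfies $\liminf_\epsilon S_\epsilon(\omega)=\alpha$ and $\limsup_\epsilon S_\epsilon(\omega)=\beta$.

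Finally, for $\omega\in M$ the map $\epsilon\mapsto S_\epsilon(\omega)$ is continuous on $(0,1]$, so I can interlace sequences realizing the $\liminf$ and the $\limsup$ into a single decreasing sequence $\epsilon_n\downarrow0$ with $S_{\epsilon_{2p}}(\omega)\to\alpha$ and $S_{\epsilon_{2p+1}}(\omega)\to\beta$. Since $U_\epsilon[f](\omega) = (u(\omega)-S_\epsilon(\omega))/(1-\epsilon)$ and $1/(1-\epsilon)\to1$, this yields $U_{\epsilon_{2p}}[f](\omega)\to u(\omega)-\int u\,d\mu_0$ and $U_{\epsilon_{2p+1}}[f](\omega)\to u(\omega)-\int u\,d\mu_1$, which is the asserted oscillation and completes the proof.
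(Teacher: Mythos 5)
Your proposal is correct, but for item \eqref{item:DiscountedGottschalkHedlund-2} it follows a genuinely different route from the paper's. For item \eqref{item:DiscountedGottschalkHedlund-1} you do essentially what the paper does: your identity $U_\epsilon[f]=(u-S_\epsilon)/(1-\epsilon)$ is the paper's lemma \ref{lemma:DiscountedMeasure}\eqref{item:DiscoutedMeasure-2} in disguise (note $\int u\circ\sigma\,d\mu_{\epsilon,\omega}=(S_\epsilon-\epsilon u)/(1-\epsilon)$), and your compactness-plus-invariance argument for the uniform convergence of $S_\epsilon$ is exactly the proof of lemma \ref{lemma:DiscountedMeasure}\eqref{item:DiscoutedMeasure-1}; your direct uniqueness argument via invariant functions is a small, correct addition. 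The real divergence is in item \eqref{item:DiscountedGottschalkHedlund-2}. The paper applies Baire category to \emph{window Birkhoff averages}: lemma \ref{lemma:ResidualSet} produces a residual set of points $\omega$ admitting times $N_p$ such that all averages $\frac1n\sum_{k<n}u\circ\sigma^{k+1}(\omega)$ with $\ln N_p<n<N_p$ are within $1/p$ of $\mu_{[p]}(u)$, alternately for $\mu_0$ and $\mu_1$; it then converts this window control into control of the discounted average at the specific scales $\epsilon_p=\ln(N_p)/N_p$ through the explicit decomposition of $\mu_{\epsilon,\omega}$ into empirical measures with quantitative error bounds (lemma \ref{lemma:Decomposition}). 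You instead apply Baire category \emph{directly to the discounted averages}: your sets $A_{\eta,N},B_{\eta,N}$ are open because $S_\epsilon$ is continuous for fixed $\epsilon$, and dense because a Birkhoff-generic point for $\mu_i$ (which exists by ergodicity, has dense orbit by minimality, and whose genericity is inherited by every point of its orbit) satisfies $S_\epsilon\to\int u\,d\mu_i$ by the classical Ces\`aro-implies-Abel (Frobenius) theorem; together with the bound $\alpha\le\liminf S_\epsilon\le\limsup S_\epsilon\le\beta$, valid at every point by weak-$*$ invariance, this forces the full oscillation on the residual intersection. The two conversions from Ces\`aro to Abel asymptotics are computationally the same summation by parts, but they are deployed differently: the paper needs the window formulation because points of its residual set are generic for neither measure, only window-generic alternately for both, whereas you invoke the Abelian theorem only at honest generic points, where it is classical, and let Baire produce the oscillation. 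Your argument is shorter and more modular; the paper's buys explicit quantitative information, namely a concrete sequence $\epsilon_p=\ln(N_p)/N_p$ tied to the window times of the point and explicit remainder estimates.
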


The notion of discounted cohomological equation is reminiscent of the notion of discounted weak KAM solution discussed in \cite{DaviniFathiIturriagaZavidovique2016-b} in the continuous setting and in \cite{DaviniFathiIturriagaZavidovique2016-a,SuThieullen2018} in the discrete setting. Contrary to the phenomenon observed in theorem~\ref{theorem:DiscountedGottschalkHedlund}, the discounted weak KAM solution converges to some selected weak KAM solution, called balanced weak KAM solution, see \cite{SuThieullen2018} proposition 18 in the discrete setting.

\section{Proofs for the cohomological equation}

\begin{proof}[Proof of theorem \ref{theorem:GottschalkHedlundMather}]
Item \eqref{item:GottschalkHedlundMather-1} is a consequence of the fact that the supremum of continuous functions is lower semi-continuous.  

Item  \eqref{item:GottschalkHedlundMather-11} is an immediate consequence  of the following identity:
\begin{equation}\label{coboundary}
\forall \omega \in \Omega, \quad f(\omega) -\bar f = u^+\circ\sigma(\omega) - u(\omega).
\end{equation}

Indeed let $\omega \in \Omega$. Either 
\begin{gather*}
\begin{split}
(f-\bar f)(\omega) = -u(\omega) &< \sum_{k=0}^{n-1} \big(f-\bar f\big)\circ \sigma^k(\omega), \ \forall n\geq2, \\
&\leq \big(f-\bar f \big)(\omega) - u\circ\sigma(\omega),
\end{split} \\
u\circ\sigma(\omega) \leq0, \quad \big(f-\bar f\big)(\omega) = u^+\circ\sigma(\omega) - u(\omega),
\end{gather*}
or 
\begin{gather*}
\begin{split}
(f-\bar f)(\omega) \geq -u(\omega) &= \inf_{n\geq2} \sum_{k=0}^{n-1} \big(f-\bar f\big)\circ \sigma^k(\omega), \\
&=\big(f-\bar f \big)(\omega) - u\circ\sigma(\omega),
\end{split} \\
u\circ\sigma(\omega) \geq0, \  \ \big(f-\bar f\big)(\omega) = u^+\circ\sigma(\omega) - u(\omega).
\end{gather*}
We have proved in particular,
\[
 \forall \omega \in \Omega, \quad \big(f-\bar f\big)(\omega) \geq u^+\circ\sigma(\omega)-u^+(\omega).
\] 
Notice that it implies $u^+\circ\sigma - u^+\in L^1(\mu)$ and $\int (u^+\circ\sigma-u^+) \,d\mu=0, \ \forall \mu \in \mathscr{P}(\Omega,\sigma)$. 

The proof of item \eqref{item:GottschalkHedlundMather-2} will follow from the fact that $u\geq0, \ \mu(d\omega) \ \text{a.e.}$ for every $\mu \in \mathscr{P}_{min}(\Omega,\sigma,f)$. Let $u^-:= (-u)^+$ and $\mu$ be a minimizing measure. We have
\begin{gather*}
0 = \int\!\big(f-\bar f\big) \, d\mu = \int\! (u^+\circ\sigma-u) \,d\mu = \int\! (u^+\circ\sigma -u^+) + u^- \,d\mu, \\
\int\! u^- \,d\mu=0 \ \ \Rightarrow \ \  u\circ\sigma^k(\omega)\geq0, \ \mu(d\omega), \ \forall k\geq0, \ \  \text{a.e.}
\end{gather*}which implies $\mu(M)=1$.

The proof of item \eqref{item:GottschalkHedlundMather-3} will follow from the fact that $u\geq0$ on the set $R$ of points of continuity of $u$ belonging to  the Mather set, and that $R$ is residual in the Mather set thanks to the lower semi-continuity of $u$. Indeed let $\omega \in R$. Then $\omega \in \text{\rm supp}(\mu)$ for some minimizing measure $\mu$. By contradiction, if $u(\omega)<0$, we would have $u<0$ on a neighborhood $U$ containing $\omega$. Since $U \cap \text{\rm supp}(\mu) \not=\emptyset$, we would have $\mu(U)>0$, contradicting $u\geq0, \ \mu$ a.e. Therefore, $u\geq0$ for any $\omega\in R$, which implies \eqref{coboundary} holds with $u^+(\omega)$ instead of $u(\omega)$. Hence, the residual set $\cap_{k\geq0} \sigma^{-k}(R)$ is contained in $M$, which completes the proof of \eqref{item:GottschalkHedlundMather-3}.
\end{proof}

\begin{proof}[Proof of corollary \ref{corollary:GottschalkHedlundMather}]
Theorem \ref{theorem:GottschalkHedlundMather} implies the existence of a lower semi-continuous function $u:\Omega  \to \mathbb{R}$ and a residual subset $M \subseteq \mathcal{M}(f)$ such that
\begin{itemize}
\item $\forall \omega \in\Omega, \ f(\omega) - \bar f \geq u\circ\sigma(\omega) - u(\omega)$,
\item $\forall \omega \in M, \  f(\omega) - \bar f = u\circ\sigma(\omega) - u(\omega)$,
\item $\forall \omega \in \Omega, \ 0\leq u(\omega) \leq C$.
\end{itemize}

The proof of item \eqref{item:GottschalkHedlundMather-21} follows from,
\[
\forall \omega \in M, \ \forall n\geq1, \ \sum_{k=0}^{n-1} \big(f - \bar f \big) \circ \sigma^k(\omega) = u^+ \circ \sigma^n(\omega) - u^+(\omega) \leq u^+ \circ \sigma^n(\omega) \leq C,
\]
and from the fact that $M$ is residual and in particular dense in the Mather set.

The proof of item  \eqref{item:GottschalkHedlundMather-22} follows from item   \eqref{item:GottschalkHedlundMather-21}. If $\text{\rm supp}(\mu) \subseteq \mathcal{M}(f)$, then
\[
\forall n\geq0, \ n\int\! f \,d \mu \leq n \bar f+C \ \Rightarrow \ \int\!f\,d\mu = \bar f.
\]

The proof of item  \eqref{item:GottschalkHedlundMather-23} follows from  theorem \ref{theorem:GottschalkHedlundMather} applied to $-f$ on any $(X,\sigma)$. Indeed, thanks to item \eqref{item:GottschalkHedlundMather-21}, we have
\begin{gather*}
\sup_{\mu \in \mathscr{P}(X,\sigma)} \int\! f \,d\mu = \bar f \ \ \text{and} \ \ 
\forall \omega \in X, \quad  \sup_{n\geq1} \sum_{k=0}^{n-1} \big( f-\bar f \big)\circ \sigma^k(\omega) < +\infty.
\end{gather*}
There exists a non-positive  upper semi-continuous function $v: X \to \mathbb{R}$ such that
\begin{gather*}
\forall \omega \in X, \ f(\omega)-\bar f \leq v \circ\sigma(\omega)-v(\omega).
\end{gather*}
Then
\[
\forall \omega \in X, \quad u \circ \sigma(\omega) - u(\omega) \leq f\omega) - \bar f \leq  v \circ \sigma(\omega) - v(\omega).
\]
Since $u-v$ is lower semi-continuous on $X$, $u-v$ attains its infimum on $X$. Define 
\[
D := \min \big\{ (u-v)(\omega) : \omega \in X \big\}, \ \  \tilde{X} := \{\omega \in X : (u-v)(\omega)\leq D\}.
\]
Since $(u-v) \circ \sigma \leq (u-v)$, $\tilde{X}$ is compact, $\sigma$-invariant, therefore by minimality is equal to $X$: $u-v=D$ on $X$, $u$ and $v$ restricted to the $X$ are continuous and $f - \bar f = u \circ \sigma- u = v \circ \sigma - v$ on $X$.
\end{proof}

We will need the following lemma for the proof of theorem \ref{theorem:GottschalkHedlund}. See proposition A.7 in Morris \cite{Morris2013} for a proof.

\begin{lemma} \label{lemma:Morris}
Let $(\Omega,\sigma)$ be a topological dynamical system and $f\in C^0(\Omega)$. Then
\[
\exists \omega_* \in \Omega, \ \forall n\geq 0, \   \frac{1}{n} \sum_{k=0}^{n-1} f\circ \sigma^k(\omega_*) \leq  \min_{\mu \in\mathscr{P}(\Omega,\sigma)} \int\! f\,d\mu.
\]
\end{lemma}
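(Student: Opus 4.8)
The plan is to reduce the statement to finding a single point whose every forward partial Birkhoff sum is nonpositive, and then to produce such a point by a compactness argument, once the relevant finite-horizon sets are shown to be nonempty. Replacing $f$ by $g := f - \bar f$, the conclusion is equivalent to finding $\omega_* \in \Omega$ with $S_n g(\omega_*) := \sum_{k=0}^{n-1} g \circ \sigma^k(\omega_*) \leq 0$ for all $n \geq 1$. For each integer $N \geq 1$ I would set
\[
C_N := \Big\{ \omega \in \Omega : \max_{1 \leq n \leq N} S_n g(\omega) \leq 0 \Big\}.
\]
Each $C_N$ is closed, being a finite intersection of sublevel sets of continuous functions, and the sequence is nested, $C_{N+1} \subseteq C_N$. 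Since $\Omega$ is compact, it then suffices to prove that every $C_N$ is nonempty: the finite intersection property gives $\bigcap_{N\geq1} C_N \neq \emptyset$, and any point in this intersection is the desired $\omega_*$.

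First I would record the elementary estimate $m_n := \min_{\omega} S_n g(\omega) \leq 0$ for every $n \geq 1$. Indeed, for any $\mu \in \mathscr{P}(\Omega,\sigma)$ the invariance of $\mu$ gives $\int S_n g \, d\mu = n \int g \, d\mu$, whence $m_n \leq \int S_n g \, d\mu = n \int g \, d\mu$; taking $\mu$ to be a minimizing measure for $f$, so that $\int g\,d\mu = \int f\,d\mu - \bar f = 0$, yields $m_n \leq 0$.

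The main step, and the one I expect to be the real obstacle, is to show $C_N \neq \emptyset$, equivalently $\min_\omega \max_{1\leq n\leq N} S_n g(\omega) \leq 0$. I would argue by contradiction through a first-passage (stopping time) construction. Suppose instead that $\max_{1\leq n\leq N} S_n g(\omega) \geq \delta$ for some $\delta > 0$ and every $\omega$. Then for each $\omega$ the first-passage time $\tau(\omega) := \min\{ 1 \leq n \leq N : S_n g(\omega) \geq \delta\}$ is well defined and $\leq N$, with $S_{\tau(\omega)} g(\omega) \geq \delta$. Starting from any $\omega_0$ and iterating $\omega_{i} := \sigma^{\tau(\omega_{i-1})}(\omega_{i-1})$, the cocycle identity $S_{a+b} g = S_a g + S_b g \circ \sigma^a$ gives, after $j$ passages of total length $N_j \leq jN$, the bound $S_{N_j} g(\omega_0) \geq j\delta$. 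For an arbitrary horizon $n$, choosing the largest $j$ with $N_j \leq n$ (so that $n < N_{j+1} \leq (j+1)N$, hence $j \geq n/N - 1$) and controlling the leftover segment of length $n - N_j < N$ by $-N\max_\Omega|g|$, I obtain $S_n g(\omega_0) \geq (n/N - 1)\delta - N\max_\Omega|g|$, uniformly in $\omega_0$. Hence $m_n \geq (n/N - 1)\delta - N\max_\Omega|g|$, which is strictly positive for $n$ large, contradicting $m_n \leq 0$. This forces $\min_\omega \max_{1\leq n\leq N} S_n g \leq 0$, so $C_N \neq \emptyset$, completing the argument. The delicate point throughout is precisely this uniformity over all horizons $n$ simultaneously at a single base point; it is why Birkhoff's theorem at a generic point of a minimizing measure, which controls only the asymptotic behaviour as $n \to \infty$, does not suffice, and why the finite-passage accounting is needed.
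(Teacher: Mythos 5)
Your argument is correct, but note that there is nothing in the paper to compare it against: the paper does not prove Lemma \ref{lemma:Morris} at all, it simply cites Proposition A.7 of Morris \cite{Morris2013}. Your proposal therefore replaces an external citation with a complete, elementary, self-contained proof, and the route you take (reduce to $g:=f-\bar f$; the nested closed sets $C_N$ and compactness; the bound $m_n:=\min_\omega S_n g(\omega)\leq 0$ obtained by integrating against a minimizing measure; then the first-passage bookkeeping showing that a uniformly positive finite-horizon maximum forces $m_n\geq (n/N-1)\delta - N\max_\Omega|g|\to+\infty$, a contradiction) is the standard and arguably cleanest argument for statements of this type. All the steps check out: the cocycle identity gives $S_{N_j}g(\omega_0)\geq j\delta$ along the concatenated passages, the leftover block has length $n-N_j<N$ (including the degenerate cases $j=0$ and $n=N_j$), and $j\geq n/N-1$ follows from $n<N_{j+1}\leq (j+1)N$. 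The only step you leave implicit is the passage from ``$C_N=\emptyset$'' to ``$\max_{1\leq n\leq N}S_n g\geq\delta>0$ everywhere'': this needs the observation that $\max_{1\leq n\leq N}S_n g$ is a continuous function on the compact $\Omega$, hence attains its minimum, which is positive when $C_N$ is empty; this is immediate, but it is the one additional use of compactness beyond the nested-intersection argument and should be said. Your closing remark is also on target: Birkhoff's theorem at a $\mu$-generic point of a minimizing measure only controls $S_n g/n$ asymptotically and cannot exclude finitely many (or even infinitely many, sublinearly growing) positive excursions, which is exactly why the uniform-in-$n$ stopping-time accounting, or some equivalent device, is genuinely needed here.
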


\begin{proof}[Proof of theorem \ref{theorem:GottschalkHedlund}]
It follows from lemma \ref{lemma:Morris} and by assumption of the theorem, there exists $\omega_* \in \Omega$ and a constant $C\geq0$ such that 
\[
\forall n\geq0, \quad  -C \leq \sum_{k=0}^{n-1} (f-\bar f)\circ\sigma^k(\omega_*) \leq 0.
\]
Then
\[
\forall m,n\geq0, \quad  \sum_{k=m}^{m+n-1} (f-\bar f)\circ\sigma^k(\omega_*) \geq -C.
\]
By minimality of $(\Omega,\sigma)$, the orbit of $\big(\sigma^k(\omega_*)\big)_{k\geq0}$ is dense,
\[
\forall \omega \in\Omega, \   \forall n\geq1, \ \sum_{k=0}^{n-1} ( f-\bar f )\circ \sigma^k(\omega) \geq -C.
\]
We conclude the proof by using corollary \ref{corollary:GottschalkHedlundMather}.
\end{proof}

\section{Proofs for the discounted cohomological equation}

Notice that the unique solution of \eqref{equation:DiscountedCohomologicalEquation}, equation \eqref{equation:DiscountedSolution}, can be written as
\begin{gather*}
U_\epsilon[f](\omega)  = -\frac{1}{\epsilon} \int\! f \, d\mu_{\epsilon,\omega}, \quad \forall \omega \in \Omega,
\end{gather*}
where $\mu_{\epsilon,\omega} := \sum_{k\geq0} \epsilon(1-\epsilon)^k \delta_{\sigma^k(\omega)}$ is a probability measure not necessarily invariant. 

\medskip
The proof of item \eqref{item:DiscountedGottschalkHedlund-1} of theorem \ref{theorem:DiscountedGottschalkHedlund} follows from the following lemma.

\begin{lemma} \label{lemma:DiscountedMeasure}
Let be $f \in C^0(\Omega)$.
\begin{enumerate}
\item \label{item:DiscoutedMeasure-1} If $\forall \mu \in \mathscr{P}(\Omega,\sigma), \ \int\! f \,d\mu = 0$, then $\int\! f \, d\mu_{\epsilon,\omega} \to 0$ uniformly in $\omega \in \Omega$.
\item \label{item:DiscoutedMeasure-2} If $f=u \circ \sigma - u$, then $U_\epsilon[f](\omega) =  u(\omega) - \int\! u \circ \sigma\,d\mu_{\epsilon,\omega}, \ \forall \omega \in \Omega$.
\end{enumerate}
\end{lemma}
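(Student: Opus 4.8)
The plan is to treat the two items separately, reserving the real analysis for item~\eqref{item:DiscoutedMeasure-1}.

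I would dispatch item~\eqref{item:DiscoutedMeasure-2} by a direct summation by parts. Writing $a_k := u\circ\sigma^k(\omega)$, the defining formula \eqref{equation:DiscountedSolution} gives $U_\epsilon[f](\omega) = -\sum_{k\geq0}(1-\epsilon)^k(a_{k+1}-a_k)$; reindexing the two telescoped series and collecting the coefficient of each $a_j$ yields $U_\epsilon[f](\omega) = a_0 - \epsilon\sum_{k\geq0}(1-\epsilon)^k a_{k+1}$, which is exactly $u(\omega) - \int u\circ\sigma\,d\mu_{\epsilon,\omega}$. (Alternatively one may start from the one-step recursion $\mu_{\epsilon,\omega} = \epsilon\,\delta_\omega + (1-\epsilon)\,\mu_{\epsilon,\sigma\omega}$, visible from the defining series, and combine it with $U_\epsilon[f] = -\tfrac1\epsilon\int f\,d\mu_{\epsilon,\omega}$.) This uses only absolute convergence of the geometric series, which is guaranteed by continuity of $u$ on the compact space $\Omega$.

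For item~\eqref{item:DiscoutedMeasure-1} the plan is to compare the Abel average $\int f\,d\mu_{\epsilon,\omega} = \sum_{k\geq0}\epsilon(1-\epsilon)^k f\circ\sigma^k(\omega)$ with the Birkhoff averages of $S_n(\omega) := \sum_{k=0}^{n-1} f\circ\sigma^k(\omega)$. Summation by parts, using $f\circ\sigma^k = S_{k+1}-S_k$ and $S_0=0$, rewrites the Abel average as $\int f\,d\mu_{\epsilon,\omega} = \sum_{k\geq1}\big(\epsilon^2 k(1-\epsilon)^{k-1}\big)A_k(\omega)$, where $A_k(\omega) := \tfrac1k S_k(\omega)$. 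Since $\sum_{k\geq1}\epsilon^2 k(1-\epsilon)^{k-1}=1$, this exhibits $\int f\,d\mu_{\epsilon,\omega}$ as a genuine convex combination of the Birkhoff averages $A_k$.

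The crux is then to prove $A_k\to0$ uniformly on $\Omega$. The hypothesis $\int f\,d\mu=0$ for all $\mu\in\mathscr{P}(\Omega,\sigma)$ says $\max_\mu\int f\,d\mu=\min_\mu\int f\,d\mu=0$. The quantities $b_n:=\max_\omega S_n(\omega)$ and $c_n:=\min_\omega S_n(\omega)$ are respectively subadditive and superadditive (because $S_{m+n}(\omega)=S_m(\omega)+S_n(\sigma^m\omega)$), so by Fekete's lemma $b_n/n$ and $c_n/n$ converge; their limits are $\max_\mu\int f\,d\mu$ and $\min_\mu\int f\,d\mu$, by the standard identification of maximal/minimal ergodic averages with extremes over invariant measures (the easy inequality integrates $S_n$ against an invariant $\mu$; the reverse takes a weak-$*$ limit of empirical measures along extremal points, as in Lemma~\ref{lemma:Morris}). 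Under our hypothesis both limits vanish, hence $\sup_\omega|A_n(\omega)|=\max(b_n/n,\,-c_n/n)\to0$. To finish, fix $\delta>0$ and $N$ with $|A_k(\omega)|\leq\delta$ for all $k\geq N$ and all $\omega$: splitting the convex combination at $k=N$, the tail is bounded by $\delta$ uniformly in $\omega$ and $\epsilon$, while the head is at most $\|f\|_\infty\,\epsilon^2 N^2/2$, which tends to $0$ as $\epsilon\to0$ for fixed $N$; letting $\delta\to0$ gives the uniform convergence. The hard part is exactly this uniformity in $\omega$: pointwise convergence is immediate from the Birkhoff theorem for each invariant measure, and it is the passage to a uniform statement—resting on compactness, continuity of $f$, and the Krylov–Bogolyubov/Morris-type identification of the Fekete limits—that does the real work.
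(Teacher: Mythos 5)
Your proof is correct, but for item~\eqref{item:DiscoutedMeasure-1} it takes a genuinely different route from the paper. For item~\eqref{item:DiscoutedMeasure-2} your summation by parts is the same algebraic identity as the paper's (the paper iterates the relation $u=(1-\epsilon)u\circ\sigma-f+\epsilon u\circ\sigma$, which telescopes to exactly your computation), so there is nothing to compare there. For item~\eqref{item:DiscoutedMeasure-1}, the paper argues softly and directly on the discounted measures themselves: it picks a sequence $(\epsilon_n,\omega_n)$ realizing $\limsup_{\epsilon\to0}\sup_\omega\int f\,d\mu_{\epsilon,\omega}$, extracts a weak-$*$ limit $\mu$ of $(\mu_{\epsilon_n,\omega_n})$, observes from $(1-\epsilon_n)\mu_{\epsilon_n,\omega_n}(g\circ\sigma)=\mu_{\epsilon_n,\omega_n}(g)-\epsilon_n g(\omega_n)$ that $\mu$ is invariant, and concludes the $\limsup$ equals $\int f\,d\mu=0$ (and symmetrically for the $\liminf$). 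You instead first establish the stronger intermediate statement that the Birkhoff averages $A_n$ converge to $0$ \emph{uniformly} on $\Omega$ (via Fekete's lemma applied to $\max_\omega S_n$ and $\min_\omega S_n$, plus the identification of their limits with $\max_\mu\int f\,d\mu$ and $\min_\mu\int f\,d\mu$ through empirical-measure limits), and then transfer this to the Abel averages by writing $\mu_{\epsilon,\omega}$ as a convex combination $\sum_{k\geq1}\epsilon^2k(1-\epsilon)^{k-1}A_{k,\omega}$ of empirical measures. The paper's argument is shorter and avoids Fekete entirely, since it never needs the uniform Birkhoff statement; yours is more quantitative and has a structural payoff: your convex-combination decomposition is precisely the one the paper isolates later as lemma~\ref{lemma:Decomposition} (there with a truncation and remainder estimate) to handle the non-balanced case, so your route in effect unifies the proof of lemma~\ref{lemma:DiscountedMeasure} with the machinery used for item~\eqref{item:DiscountedGottschalkHedlund-2} of theorem~\ref{theorem:DiscountedGottschalkHedlund}, at the cost of invoking the ergodic-optimization identification of Fekete limits (your appeal to lemma~\ref{lemma:Morris}-type arguments), which the paper's compactness trick sidesteps.
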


\begin{proof}[ Proof of item \eqref{item:DiscoutedMeasure-1}]
We first prove that 
\[
\limsup_{\epsilon\to0}\sup_{\omega \in \Omega} \int\! f\,d\mu_{\epsilon,\omega} = 0.
\]
Let $(\epsilon_n)_{n\geq0}$ be a sequence tending to 0 and realizing the above $\limsup$. Let $(\omega_n)_{n\geq0}$ be a sequence of points of $\Omega$ realizing the supremum of $\int\!  f\,d\mu_{\epsilon_n,\omega}$  for each $\epsilon_n$. Choose a sub-sequence of $(\epsilon_n)_{n\geq0}$, that we denote in the same way, such that $(\mu_{\epsilon_n,\omega_n})_{n\geq0}$ converges to some probability measure $\mu$. Notice that
\[
\forall n \geq0, \ \forall g \in C^0(\Omega), \quad (1-\epsilon_n) \mu_{\epsilon_n,\omega_n}(g\circ\sigma) = \mu_{\epsilon_n,\omega_n}(g) -\epsilon_n g(\omega_n).
\]
Taking $n\to+\infty$, we obtain $\mu \in \mathscr{P}(\Omega,\sigma)$ and 
\[
\limsup_{\epsilon\to0}\sup_{\omega \in \Omega} \int\! f\,d\mu_{\epsilon,\omega} = \int\! f \, d\mu_{\epsilon_n,\omega_n} \to \int\! f \,d\mu=0.
\] 
Similarly we show $\liminf_{\epsilon\to0}\inf_{\omega \in \Omega} \int\! f\,d\mu_{\epsilon,\omega} = 0$. Item \eqref{item:DiscoutedMeasure-1} is proved.
\end{proof}

\begin{proof}[Proof of item \eqref{item:DiscoutedMeasure-2}]
We observe
\begin{align*}
u &= u\circ\sigma -f  = (1-\epsilon)u\circ\sigma-f + \epsilon u\circ\sigma \\
&= \sum_{k\geq0} (1-\epsilon)^k (-f+\epsilon u\circ\sigma) \circ \sigma^k \\
u(\omega) &= U_\epsilon[f](\omega) + \int\! u\circ\sigma \,d\mu_{\epsilon,\omega}. \qedhere
\end{align*}
\end{proof}

\begin{proof}[Proof of item \eqref{item:DiscountedGottschalkHedlund-1} of theorem \ref{theorem:DiscountedGottschalkHedlund}]
If $f$ is a balanced coboundary, $f = u\circ\sigma-u$ for some $u$ satisfying $\int\! u\,d\mu = 0, \ \forall \mu \in\mathcal{M}(\Omega,\sigma)$. Then, thanks to lemma \ref{lemma:DiscountedMeasure},
\begin{gather*}
U_\epsilon[f](\omega) = u(\omega) - \int\! u\circ\sigma \, d\mu_{\epsilon,\omega} \to u(\omega), \quad \text{uniformly in $\omega \in\Omega$}. 
\end{gather*}
In particular, such a transfer function $u$ is unique.
\end{proof}

The proof of the second item of theorem \ref{theorem:DiscountedGottschalkHedlund} will be given after the two following lemmas.

\begin{lemma} \label{lemma:ResidualSet}
Let $(\Omega,\sigma)$ be a minimal dynamical system, and $\mu_0,\mu_1$ be two ergodic measures. Then there exists a residual subset $M\subseteq \Omega$ such that for every $\omega \in M$ there exists a sequence of integers $(N_p)_{p\geq1}$ such that 
\begin{gather*}
\forall p \geq1, \quad N_{2p} < \ln(N_{2p+1}), \ N_{2p-1} < \ln(N_{2p}), \\
\forall p \geq 1, \ \forall \ln(N_p) < n < N_p, \quad \Big| \frac{1}{n}\sum_{k=0}^{n-1} u\circ\sigma^k(\omega) - \mu_{[p]}(u) \Big| < \frac{1}{p},
\end{gather*}
where $[p] = p \mod 2$.
\end{lemma}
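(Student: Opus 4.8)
The plan is to realize $M$ as a countable intersection of open dense sets and then to extract the sequence $(N_p)$ greedily from membership in that intersection. Fix the continuous function $u$. For each $i\in\{0,1\}$, each quality index $q\geq1$, and each threshold $A\geq1$, I would introduce
\[
\mathcal{U}(i,q,A) := \Big\{ \omega \in \Omega : \exists N > A,\ \forall n \in \mathbb{Z}\cap(\ln N,N),\ \Big| \tfrac{1}{n}\textstyle\sum_{k=0}^{n-1} u\circ\sigma^k(\omega) - \mu_i(u) \Big| < \tfrac1q \Big\}.
\]
For each fixed $N$ the inner requirement is a finite conjunction of strict inequalities involving the continuous maps $\omega \mapsto \frac1n\sum_{k<n} u\circ\sigma^k(\omega)$, hence defines an open set, and taking the union over $N>A$ keeps it open. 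Thus each $\mathcal{U}(i,q,A)$ is open, and the countable intersection $M := \bigcap_{i,q,A}\mathcal{U}(i,q,A)$ is residual by Baire, \emph{provided} each factor is dense.

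Density is the heart of the matter and is where minimality enters. First I would invoke Birkhoff's ergodic theorem for the ergodic measure $\mu_i$: the set of $x$ with $\frac1n\sum_{k<n} u\circ\sigma^k(x)\to\mu_i(u)$ has full $\mu_i$-measure, hence is nonempty. A short computation, $\sum_{k<n} u\circ\sigma^k(\sigma^s x) = \sum_{k<n+s} u\circ\sigma^k(x) - \sum_{k<s} u\circ\sigma^k(x)$, shows that the entire forward orbit of such a generic $x$ consists of generic points, since for fixed $s$ the correction terms contribute $O(s/n)$. By minimality the orbit of $x$ is dense, so given any nonempty open $U$ there is $s$ with $\omega := \sigma^s x \in U$, and $\omega$ is again $\mu_i$-generic; choosing $N>A$ so large that $\ln N$ exceeds the convergence threshold of $\omega$ then places $\omega$ in $U\cap\mathcal{U}(i,q,A)$. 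This establishes density, and hence residuality of $M$.

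Finally I would extract the sequence for a fixed $\omega\in M$ by a greedy recursion that feeds the previous scale into the threshold parameter. Set $A_1=1$ and, having produced $N_1<\cdots<N_p$, put $A_{p+1} := \lceil e^{\,N_p+1}\rceil$, so that any integer exceeding $A_{p+1}$ automatically satisfies $N_p < \ln N_{p+1}$. Since $\omega$ lies in $\mathcal{U}\big([p+1],p+1,A_{p+1}\big)$ — it belongs to every member of the family — there is an admissible $N_{p+1}>A_{p+1}$ delivering the window estimate at level $1/(p+1)$ for $\mu_{[p+1]}$. The resulting sequence obeys $N_j<\ln N_{j+1}$ for all $j$ (equivalently the stated even and odd inequalities) and the required average bound on each window $(\ln N_p,N_p)$. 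The one point demanding care is that $A_{p+1}$ depends on $\omega$ through $N_p$; this is precisely why I intersect over all $A\in\mathbb{N}$ rather than a single cofinal choice, which guarantees that the needed $\mathcal{U}$ is available at every stage. The superexponential cutoff $\ln N_{p+1}>N_p$ does no work in the density step, where $N$ may be taken freely large, but it is exactly what lets the disjoint windows be chained while leaving the orbit unconstrained during the transitions between the $\mu_0$- and $\mu_1$-statistics.
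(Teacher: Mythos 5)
Your proof is correct and follows essentially the same route as the paper: both realize $M$ as a countable intersection of open sets (parametrized by measure index, precision, and a threshold) whose density comes from Birkhoff's ergodic theorem combined with minimality, and both extract $(N_p)_{p\geq1}$ by the same greedy recursion in which the threshold parameter is fed the previous scale so that $N_p < \ln N_{p+1}$. The only cosmetic difference is the density step: the paper observes that each set contains the $\mu_i$-generic points, a full-measure set, hence is dense in $\mathrm{supp}(\mu_i)$, which equals $\Omega$ by minimality, whereas you propagate genericity along the dense forward orbit of a single generic point; both are short consequences of the same two ingredients.
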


\begin{proof}
Let $i=0,1$.  As $\mu_i$ is ergodic, thanks to Birkhoff's ergodic theorem, for every $p\geq1,q\geq1$, 
\[
U_{p,q}^{(i)} := \Big\{ \omega \in \text{\rm supp}(\mu_i) : \exists N\geq q, \ \forall \ln(N) < n <N, \ \Big|\frac{1}{n} \sum_{k=0}^{n-1}u\circ\sigma^{k+1}(\omega)-\mu_i(u) \Big| < \frac{1}{p}\Big\}
\]
is an open and dense set of $\text{\rm supp}(\mu_i)$. As $(\Omega,\sigma)$ is minimal, $\text{\rm supp}(\mu_i)=\Omega$. The set $M_i := \cap_{p,q\geq1} U_{p,q}^{(i)}$ is thus a residual set of $\Omega$. Define $M := M_0 \cap M_1$. Then $M$ is a residual set. If $\omega \in M$, we construct by induction a sequence of integers $(N_p)_{p\geq1}$ satisfying the properties of the above lemma:
\begin{gather*}
p=1,q=1, \ \exists N_1, \  \forall \ln(N_1) < n< N_1, \ \Big|\frac{1}{n} \sum_{k=0}^{n-1}u\circ\sigma^{k+1}(\omega)-\mu_1(u) \Big| < 1,\\
p=2,q=\ln(N_1), \ \exists N_2> q, \  \forall \ln(N_2) < n <  N_2, \ \Big|\frac{1}{n} \sum_{k=0}^{n-1}u\circ\sigma^{k+1}(\omega)-\mu_0(u) \Big| < \frac{1}{2}, \\
p=3,q=\ln(N_2), \ \exists N_3> q, \ \forall \ln(N_3) < n <  N_3, \ \Big|\frac{1}{n} \sum_{k=0}^{n-1}u\circ\sigma^{k+1}(\omega)-\mu_1(u) \Big| < \frac{1}{3},
\end{gather*}
and so on.
\end{proof}

Denote by $A_{n,\omega} := \frac{1}{n}\sum_{k=0}^{n-1} \delta_{\sigma^k(\omega)}$ the empirical measure.

\begin{lemma} \label{lemma:Decomposition}
For every $\epsilon>0, \ \omega \in \Omega,\ n\geq2$
\begin{gather*}
\mu_{\epsilon,\omega} = \sum_{k=\lfloor \ln(n) \rfloor}^{n-2}(k+1)\epsilon^2(1-\epsilon)^kA_{k+1,\omega} + R_{n,\epsilon,\omega}, \\
R_{n,\epsilon,\omega} = \sum_{k=0}^{\lfloor \ln(n) \rfloor-1} (k+1)\epsilon^2(1-\epsilon)^k A_{k+1,\omega} + n\epsilon(1-\epsilon)^{n-1} A_{n,\omega} + (1-\epsilon)^n\mu_{\epsilon,\sigma^n(\omega)}, \\
\sup_{\omega}R_{n,\epsilon,\omega}(\mathds{1}) \leq (\epsilon \ln(n))^2 + (1+ \epsilon n e )e^{-\epsilon n}.
\end{gather*}
\end{lemma}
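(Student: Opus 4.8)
The plan is to reduce everything to a single Abel-summation identity that expresses the discounted measure as a weighted average of the empirical measures, and then to split that identity into the stated main term, low-index remainder, and tail. First I would establish the key identity
\[
\mu_{\epsilon,\omega} = \sum_{k\geq0}(k+1)\epsilon^2(1-\epsilon)^k A_{k+1,\omega}.
\]
This follows by writing $(k+1)A_{k+1,\omega}=\sum_{j=0}^{k}\delta_{\sigma^j(\omega)}$, substituting it on the right-hand side, and exchanging the two summations: the point mass $\delta_{\sigma^j(\omega)}$ then carries the total weight $\epsilon^2\sum_{k\geq j}(1-\epsilon)^k = \epsilon(1-\epsilon)^j$, which reproduces exactly the defining expansion $\mu_{\epsilon,\omega}=\sum_{j\geq0}\epsilon(1-\epsilon)^j\delta_{\sigma^j(\omega)}$. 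Since all terms are non-negative, the interchange of summations is legitimate.

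Next I would truncate this series at the two cut-offs $\lfloor\ln(n)\rfloor$ and $n-1$. The terms with $\lfloor\ln(n)\rfloor\leq k\leq n-2$ are exactly the displayed main sum, the terms with $0\leq k\leq\lfloor\ln(n)\rfloor-1$ form the first piece of $R_{n,\epsilon,\omega}$, and it remains to identify the tail $\sum_{k\geq n-1}(k+1)\epsilon^2(1-\epsilon)^k A_{k+1,\omega}$ with the last two terms of $R_{n,\epsilon,\omega}$. For this I would again use $(k+1)A_{k+1,\omega}=\sum_{j=0}^{k}\delta_{\sigma^j(\omega)}$ and split the inner sum at $j=n$. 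The contribution of the indices $j<n$ factors as $\big(\sum_{j=0}^{n-1}\delta_{\sigma^j(\omega)}\big)\,\epsilon^2\sum_{k\geq n-1}(1-\epsilon)^k = n\epsilon(1-\epsilon)^{n-1}A_{n,\omega}$; the contribution of the indices $j\geq n$, after the substitution $j=n+i$, $k=n+l$ and the observation $\delta_{\sigma^{n+i}(\omega)}=\delta_{\sigma^i(\sigma^n(\omega))}$, becomes $(1-\epsilon)^n$ times the very same series evaluated at the base point $\sigma^n(\omega)$, hence equals $(1-\epsilon)^n\mu_{\epsilon,\sigma^n(\omega)}$ by the identity of the first step.

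Finally I would bound the total mass. Since each $A_{k+1,\omega}$, the measure $A_{n,\omega}$, and $\mu_{\epsilon,\sigma^n(\omega)}$ are probability measures, evaluating $R_{n,\epsilon,\omega}$ on $\mathds{1}$ erases all $\omega$-dependence and leaves $\sum_{k=0}^{\lfloor\ln(n)\rfloor-1}(k+1)\epsilon^2(1-\epsilon)^k + n\epsilon(1-\epsilon)^{n-1} + (1-\epsilon)^n$, so the supremum over $\omega$ is trivial. For the first sum I would bound each factor $(k+1)\leq\ln(n)$, the number of terms by $\ln(n)$, and $(1-\epsilon)^k\leq1$, yielding $(\epsilon\ln(n))^2$. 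For the last two I would use $(1-\epsilon)^m\leq e^{-\epsilon m}$ together with $e^{\epsilon}\leq e$ for $\epsilon\leq1$, obtaining $n\epsilon(1-\epsilon)^{n-1}\leq \epsilon n e\,e^{-\epsilon n}$ and $(1-\epsilon)^n\leq e^{-\epsilon n}$, which assemble into $(1+\epsilon n e)e^{-\epsilon n}$.

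The step I expect to be the main obstacle is the tail identification, since it requires recognizing the shifted copy of the fundamental series at base point $\sigma^n(\omega)$ and carefully matching the geometric factors $(1-\epsilon)^{n-1}$ and $(1-\epsilon)^n$; once the algebraic identity is in place, the mass estimates are routine.
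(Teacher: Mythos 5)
Your proof is correct, and it reaches the same decomposition through a slightly different organization than the paper. The paper works in the opposite order: it first truncates the defining series at index $n$, where the tail is \emph{trivially} $(1-\epsilon)^n\mu_{\epsilon,\sigma^n(\omega)}$ (just factor $(1-\epsilon)^n$ out of $\sum_{k\geq n}\epsilon(1-\epsilon)^k\delta_{\sigma^k(\omega)}$), and then applies a finite Abel summation to the initial block, writing $\delta_{\sigma^k(\omega)}=(k+1)A_{k+1,\omega}-kA_{k,\omega}$ and telescoping to get
\[
\sum_{k=0}^{n-1}\epsilon(1-\epsilon)^k\delta_{\sigma^k(\omega)}
=\sum_{k=0}^{n-2}(k+1)\epsilon^2(1-\epsilon)^kA_{k+1,\omega}+n\epsilon(1-\epsilon)^{n-1}A_{n,\omega}.
\]
You instead first prove the infinite rearrangement $\mu_{\epsilon,\omega}=\sum_{k\geq0}(k+1)\epsilon^2(1-\epsilon)^kA_{k+1,\omega}$ and then truncate, which forces you to re-identify the tail $\sum_{k\geq n-1}$ by a shift substitution --- precisely the step you flag as the main obstacle. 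That step is self-inflicted: in the paper's ordering it costs nothing, because the tail never gets expanded in the first place. On the other hand, your route has two small advantages: it isolates the clean standalone identity expressing $\mu_{\epsilon,\omega}$ as a convex combination of empirical measures (which makes the structure of the lemma transparent), and you actually write out the mass estimate $R_{n,\epsilon,\omega}(\mathds{1})\leq(\epsilon\ln n)^2+(1+\epsilon n e)e^{-\epsilon n}$, which the paper's proof leaves entirely implicit. Both arguments rest on the same Abel-summation mechanism; yours is Fubini--Tonelli applied globally, the paper's is summation by parts applied locally.
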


\begin{proof}
We have
\begin{gather*}
\mu_{\epsilon,\omega} = \sum_{k=0}^{n-1}\epsilon(1-\epsilon)^k\delta_{\sigma^k(\omega)} + (1-\epsilon)^n\mu_{\epsilon,\sigma^n(\omega)}, \\
\begin{split}
\sum_{k=0}^{n-1}\epsilon(1-\epsilon)^k\delta_{\sigma^k(\omega)} &= \sum_{k=0}^{n-1}\epsilon(1-\epsilon)^k \big((k+1)A_{k+1,\omega} - kA_{k,\omega} \big) \\
&=\sum_{k=0}^{n-2} (k+1)\epsilon^2(1-\epsilon)^kA_{k+1,\omega} + n\epsilon(1-\epsilon)^{n-1}A_{n,\omega}. \qedhere
\end{split}
\end{gather*}
\end{proof}

\begin{proof}[Proof of item \eqref{item:DiscountedGottschalkHedlund-2} of theorem \ref{theorem:DiscountedGottschalkHedlund}]
Let $(\Omega,\sigma)$ be a minimal dynamical system and $f$ be a non-balanced coboundary: $f = u\circ\sigma - u$, $\int\! u\,d\mu_0 \not= \int\! u \,d\mu_1$ for some ergodic measures $\mu_0,\mu_1$. Let $M$ be the residual set given by lemma \ref{lemma:ResidualSet}. Let $\omega \in M$ and $(N_p)_{p\geq1}$ be the sequence of integers given by lemma \ref{lemma:ResidualSet}. Let $\epsilon_p := \frac{\ln(N_p)}{N_p}$. Define
\[
\alpha_p := \sum_{k=\lfloor \ln(N_p) \rfloor}^{N_p-2}(k+1)\epsilon_p^2(1-\epsilon_p)^k.
\]
Then, using lemma \ref{lemma:Decomposition},
\begin{gather*}
0 \leq 1 - \alpha_p = \sup_\omega R_{N_p,\epsilon_p,\omega} \leq  (\epsilon_p \ln(N_p))^2 + (1+ \epsilon_p N_p e )e^{-\epsilon_p N_p} \to 0, \\
\begin{split}
\mu_{\epsilon_p,\omega}(u\circ\sigma) - \mu_{[p]}(u) &=  \sum_{k=\lfloor \ln(N_p) \rfloor}^{N_p-2}(k+1)\epsilon^2(1-\epsilon)^k \big( A_{k+1,\omega}(u\circ\sigma) - \mu_{[p]}(u) \big) \\ 
&\quad\quad  + R_{N_p,\epsilon_p,\omega}(u\circ\sigma) - (1-\alpha_p) \mu_{[p]}(u), \\
| \mu_{\epsilon_p,\omega}(u\circ\sigma) - \mu_{[p]}(u)  | &\leq \frac{\alpha_p}{p} + 2 (1-\alpha_p) \|u\|_\infty \to 0.
\end{split}
\end{gather*}
We conclude the proof of the theorem using item \eqref{item:DiscoutedMeasure-2} of lemma \ref{lemma:DiscountedMeasure}.
\end{proof}


\end{document}